\theoremstyle{plain}
\newtheorem{theorem}{Theorem}
\newtheorem{cor}[theorem]{Corollary}
\newtheorem{lemma}[theorem]{Lemma}
\theoremstyle{remark}
\newcommand{\sset}[1]{\left\{#1\right\}}
\newcommand{\ceil}[1]{\left\lceil#1\right\rceil}
\title{Approximately coloring graphs without long induced paths}
\author{Maria Chudnovsky \thanks{Supported by NSF grant DMS-1550991.}
\\
Princeton University, Princeton, NJ 08544
\\
\\
Oliver Schaudt
\\
Universit\"at zu K\"oln, K\"oln, Germany
\\
\\
Sophie Spirkl
\\
Princeton University, Princeton, NJ 08544
\\
\\
Maya Stein \thanks{Supported by Fondecyt grant 1140766 and Millennium Nucleus Information and Coordination in Networks.}
\\
Universidad de Chile, Santiago, Chile
\\
\\
Mingxian Zhong
\\
Columbia University, New York, NY 10027}
\date{\today}
\begin{document}

\maketitle

\begin{abstract}
It is an open problem whether the 3-coloring problem can be solved in polynomial time in the class of graphs that do not contain an induced path on $t$ vertices, for fixed $t$.
We propose an algorithm that, given a 3-colorable graph without an induced path on $t$ vertices, computes a coloring with $\max\sset{5,2\ceil{\frac{t-1}{2}}-2}$ many colors.
If the input graph is triangle-free, we only need $\max\sset{4,\ceil{\frac{t-1}{2}}+1}$ many colors.
The running time of our algorithm is $O((3^{t-2}+t^2)m+n)$ if the input graph has $n$ vertices and $m$ edges.
\end{abstract}

\section{Introduction}

A \emph{$k$-coloring} of a graph $G$ is a function $c : V(G) \rightarrow \sset{1,\dots, k}$ so that $c(v) \neq c(u)$ for all $vu \in E(G)$. 
In the \emph{$k$-coloring problem}, one has to decide whether a given graph admits a $k$-coloring or not; it is NP-complete for all $k \geq 3$, as Karp proved in his seminal paper~\cite{karp}.

\paragraph{Coloring $H$-free graphs.}
One way of dealing with this hardness is to restrict the structure of the instances.
In this paper we study \emph{$H$-free graphs}, that is, graphs that do not contain a fixed graph $H$ as an induced subgraph.
It is known that the $k$-coloring problem is NP-hard on $H$-free graphs if $H$ is any graph other than a subgraph of a chordless path~\cite{Hol81,KL07,KKTW01,LG83}.
Therefore, we further restrict our attention to $P_t$-free graphs, $P_t$ being the chordless path on $t$ vertices.

A substantial number of papers study the complexity of coloring $P_t$-free graphs, and most of the results are gathered in the survey paper of Golovach \emph{et al.}~\cite{GJPS15}.
Let us recall a few results that define the current state-of-the-art regarding the complexity of $k$-coloring in $P_t$-free graphs.

\begin{theorem}[Bonomo \emph{et al.}~\cite{BCMSSZ15}] The 3-coloring problem can be solved in polynomial time in the class of $P_7$-free graphs.
This holds true even if each vertex comes with a subset of $\{1,2,3\}$ of feasible colors.
\end{theorem}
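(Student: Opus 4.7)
The natural approach is to solve the more general \emph{list $3$-colouring} problem, in which every vertex $v$ carries a list $L(v)\subseteq\sset{1,2,3}$ of allowed colours; $3$-colouring is the special case $L(v)=\sset{1,2,3}$ for all $v$. The key elementary observation is that a list-$3$-colouring instance in which every list has size at most $2$ is equivalent to an instance of 2-SAT (one Boolean variable per vertex, one clause per edge forbidding the two colourings that would create a monochromatic edge), and hence is solvable in linear time. The strategy is therefore to produce, in polynomial time, a polynomial-size family of list-$3$-colouring subinstances of $G$ in which every list has size at most $2$, such that $G$ is $3$-colourable iff at least one subinstance is satisfiable.

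After reducing to the case that $G$ is connected and preprocessing to remove vertices whose lists are already small, the first real step is to locate a constant-size ``seed'' set $S\subseteq V(G)$ such that $N[S]$ covers a large portion of $V(G)$. For $P_7$-free graphs such a seed is supplied by a Bacs\'o--Tuza-style domination result: every connected $P_7$-free graph has a dominating induced subgraph of constant size and simple shape. I would enumerate all $3^{O(1)}$ list-consistent colourings of $S$; each such colouring immediately shrinks $L(v)$ for every $v\in N(S)$ to a set of size at most $2$.

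The main obstacle is to handle vertices \emph{outside} $N[S]$, whose lists may still have size $3$. Here the $P_7$-free hypothesis is crucial: a vertex $v$ far from $S$ together with a shortest path from $S$ to $v$ and a suitable induced path extending from $v$ would produce an induced $P_7$, forbidden by hypothesis. By performing a BFS layering rooted at $S$ and branching on the colour of a carefully selected constant number of ``anchor'' vertices in each layer, one can propagate the colouring outward so that after finitely many additional guesses every remaining list has size at most $2$. The delicate part is precisely the choice of these anchors and the case analysis ensuring that one never needs more than a constant total number of them -- this is where most of the structural work goes, and where the list formulation (as opposed to plain $3$-colouring) is genuinely needed, since partial colourings created along the way induce non-trivial lists.

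Once the branching is complete, each of the polynomially many subinstances has every list of size at most $2$ and is handed to a 2-SAT solver; the original instance is a yes-instance iff at least one subinstance is satisfiable. The total running time is polynomial in $n$ and $m$, and the list version of the statement comes for free because the argument is carried out for arbitrary initial lists throughout.
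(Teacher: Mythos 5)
This statement is quoted from Bonomo \emph{et al.}~\cite{BCMSSZ15} and is not proved in the present paper, so there is no in-paper argument to compare against; I can only assess your sketch on its own terms.

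Your outer framework is sound and does match the genuine strategy of that line of work: work with list $3$-colouring rather than plain $3$-colouring, reduce instances with all lists of size at most $2$ to 2-SAT, and aim to enumerate polynomially many ``seed'' colourings whose propagation shrinks every list to size at most~$2$. However, there are two genuine gaps. First, the structural lemma you lean on is false: it is not true that every connected $P_7$-free graph has a dominating induced subgraph of constant size. The Bacs\'o--Tuza / Camby--Schaudt results give, for connected $P_7$-free graphs, a connected dominating set inducing a $P_5$-free graph or a clique -- both of which can be arbitrarily large -- not a constant-size dominator. (Note also the internal tension in your sketch: if $S$ were actually dominating, then $N[S]=V(G)$ and the entire ``outside $N[S]$'' step you go on to describe would be vacuous.) Second, the step where ``most of the structural work goes'' -- choosing anchors layer by layer so that a bounded total number of branches suffices -- is precisely the whole content of the theorem, and you leave it entirely unargued. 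Bounded diameter (at most~$5$ for connected $P_7$-free graphs) does limit the number of BFS layers, but each layer can be linear in size, so one still needs a nontrivial structural argument to keep the number of guesses polynomial; simply asserting ``a carefully selected constant number of anchors in each layer'' does not supply that argument. The actual proof of Bonomo \emph{et al.} is a lengthy structural case analysis that does not proceed via a constant-size dominating set, so as written your proposal would not go through.
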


It is an intriguing open question whether the 3-coloring problem is solvable in polynomial time in the class of $P_t$-free graphs, whenever $t>7$ is fixed.

Going back to $P_5$-free graphs, an elegant algorithm of Ho\`ang \emph{et al.}~\cite{hoang} shows that this class is structurally restricted enough to allow for a polynomial time algorithm solving the $k$-coloring problem.

\begin{theorem}[Ho\`ang \emph{et al.}~\cite{hoang}] The $k$-coloring problem can be solved in polynomial time in the class of $P_5$-free graphs, for each fixed $k$.
\end{theorem}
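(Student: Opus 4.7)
The plan is to proceed by induction on $k$, reducing $k$-colouring of $P_5$-free graphs to $(k-1)$-colouring by guessing a single colour class.

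The crucial structural ingredient is that every $P_5$-free graph on $n$ vertices has only polynomially many maximal independent sets, and that they can all be listed in polynomial time. Taking this for granted, the algorithm is very short. For $k = 1$, just test whether $G$ has no edges. For $k \geq 2$, enumerate all maximal independent sets $I$ of $G$ and, for each one, recursively try to $(k-1)$-colour the induced subgraph $G - I$; if any recursive call succeeds, output its colouring extended by giving colour $k$ to every vertex of $I$, and otherwise answer ``no''.

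Correctness is immediate: in any $k$-colouring at least one colour class is an independent set, and any independent set extends to a maximal independent set $I$; then $G - I$ is $(k-1)$-colourable using the remaining colour classes, and $G - I$ is itself $P_5$-free as an induced subgraph, so the recursion stays within the class. For the running time, the search tree has depth $k$ and branching factor at most the polynomial number of maximal independent sets of the current instance, so the total number of recursive calls is $n^{O(k)}$, and the work at each node is polynomial; for fixed $k$ this is a polynomial-time algorithm.

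The main obstacle is the structural bound on the number of maximal independent sets in $P_5$-free graphs, which is where the hypothesis is really used. I would attack this using a domination-style theorem for $P_5$-free graphs (in the spirit of Bacs\'o and Tuza, who showed that every connected $P_5$-free graph admits a dominating clique or dominating induced $P_3$), iterated across the recursion. The point is that a maximal independent set $I$ is determined, up to polynomially many choices, by its intersection with a small dominating structure, since every vertex outside the dominating set has a neighbour inside it and this severely restricts how $I$ can extend. Iterating this observation along the recursive decomposition of $G$ should produce the required polynomial bound and simultaneously yield the enumeration algorithm.
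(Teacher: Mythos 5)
The paper states this theorem as a citation to Ho\`ang \emph{et al.}\ and gives no proof of its own, so there is no proof to compare against; I can only assess your argument on its own terms, and it has a fatal gap.

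The structural claim your whole plan rests on --- that a $P_5$-free graph has only polynomially many maximal independent sets --- is false, and badly so. A disjoint union of $n/2$ edges (a perfect matching) contains no induced $P_3$, let alone $P_5$, yet it has $2^{n/2}$ maximal independent sets, one for each choice of endpoint in each edge. If you want a connected example, take the join of a single vertex $v$ with a disjoint union of $n/3$ triangles: this graph is $P_4$-free (any induced path containing $v$ has at most three vertices since $v$ is universal, and any induced path avoiding $v$ lives inside one triangle), and it has $3^{n/3}+1$ maximal independent sets. So even in very restrictive subclasses of $P_5$-free graphs, the branching factor of your search tree is exponential, and your algorithm runs in exponential time. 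The issue is not fixable by the domination argument you sketch: the Bacs\'o--Tuza dominating clique or dominating $P_3$ does constrain how a maximal independent set meets the dominating set, but it does nothing to control the exponentially many ways the set can extend into the rest of the graph (as the examples above show --- there the dominating set is a single vertex or a clique, and the explosion happens entirely outside it).

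You have correctly identified the Bacs\'o--Tuza theorem as the right structural tool; that is indeed what the actual algorithm of Ho\`ang \emph{et al.}\ uses. But it is used quite differently. Rather than enumerating independent sets, one enumerates the (constantly many, for fixed $k$) colourings of a small dominating clique or dominating $P_3$; since every other vertex has a coloured neighbour, its list of feasible colours drops to size at most $k-1$. The problem then becomes a list-colouring problem with shrunken lists, and the argument recurses on the list sizes (eventually bottoming out at 2-list-colouring, which is polynomial via 2-SAT, exactly as this paper uses in the proof of Theorem~\ref{thm:main}). The recursion is on the size of the colour lists, not on peeling off colour classes one at a time, and at no point does one need to bound the number of maximal independent sets.
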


The above result is interesting also for the fact that if $k$ is part of the input, the $k$-coloring problem in $P_5$-free graphs becomes NP-hard again~\cite{KKTW01}.
Regarding negative results, the following theorem of Huang is the best known so far.

\begin{theorem}[Huang~\cite{huang}] For all $k \geq 5$, the $k$-coloring problem is NP-complete in the class of $P_6$-free graphs.
Moreover, the 4-coloring problem is NP-complete in the class of $P_7$-free graphs.
\end{theorem}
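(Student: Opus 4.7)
Both parts assert NP-completeness. Membership in NP is immediate, as a purported $k$-colouring is verified in polynomial time by inspecting each edge, so the substance lies in the hardness reductions.

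For the first statement I would polynomially reduce a standard NP-hard problem, such as $3$-SAT or $3$-colouring of general graphs, to $k$-colouring on $P_6$-free graphs, for each fixed $k\geq 5$. Given an instance $\phi$, I would build $G_\phi$ from three ingredients: a \emph{palette clique} $K$ of size $k-2$ whose vertices are adjacent to almost all other vertices, thereby fixing $k-2$ colours and confining every remaining vertex to two ``truth'' colours $c_T$ and $c_F$; one \emph{variable gadget} per variable, containing two adjacent literal vertices $x$ and $\overline{x}$ forced to receive opposite colours from $\{c_T,c_F\}$ and so encoding a Boolean value; and one \emph{clause gadget} per clause, attached to the relevant literal vertices so that a proper $k$-colouring of $G_\phi$ exists if and only if every clause sees at least one $c_T$-coloured literal. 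Correctness of the reduction is routine once the gadgets are specified.

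The main difficulty, and the real content of the theorem, is to realise this blueprint inside a $P_6$-free graph. Inside each small gadget $P_6$-freeness can be verified by inspection; the dangerous induced paths are those that zig-zag between different gadgets. To destroy them, one routes every ``external'' edge through a designated near-universal vertex of the palette clique, so that any two gadget vertices on such a would-be induced path share a common neighbour and the induced-path condition fails. Verifying $P_6$-freeness then reduces to a case analysis on how the vertices of a hypothetical induced $P_6$ distribute among palette, variable gadgets and clause gadgets. I expect this case analysis, and the coordinated design of gadgets together with their attachments to $K$, to be the main technical hurdle.

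For the $4$-colouring, $P_7$-free statement I would follow the same blueprint with a tighter budget: only $4-2=2$ palette colours remain to enforce structure, which weakens the short-circuiting effect and is precisely why one has to relax the forbidden path from $P_6$ to $P_7$. The variable and clause gadgets must be redesigned to fit into this smaller budget (for instance by reducing from NAE-SAT or $4$-colouring so that fewer colour slots need to be reserved), and the $P_7$-free case analysis then becomes the most demanding step.
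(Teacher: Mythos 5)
This statement is one of the background theorems the paper cites from Huang~\cite{huang}; the paper does not prove it, so there is no in-paper argument to compare your sketch against. Judged on its own terms, your high-level blueprint (reduce from a SAT variant, use a palette clique to cut the effective colour budget to two ``truth'' colours, build variable and clause gadgets, then fight for $P_6$- or $P_7$-freeness) is in the right spirit: Huang's actual constructions are indeed SAT-style reductions with variable and clause gadgets, and you correctly identify that the genuine content is proving that the constructed graph has no long induced path. Your suggestion to switch to NAE-SAT for the tighter 4-colour / $P_7$-free case is also a sensible instinct.

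However, the one mechanism you actually propose for killing induced $P_6$'s is wrong. You claim that if two gadget vertices on a would-be induced path share a common neighbour (the near-universal palette vertex), then ``the induced-path condition fails.'' It does not. Whether $v_1,\dots,v_6$ induce a $P_6$ depends only on the edges among $\{v_1,\dots,v_6\}$; a vertex $w$ outside this set, no matter how many of the $v_i$ it is adjacent to, contributes no chord. Concretely, take any long induced path and add a universal vertex: the path is still induced. So a near-universal vertex does not short-circuit paths that avoid it, and the $P_6$-freeness argument cannot rest on common neighbours. What actually has to be shown is that the gadget-to-gadget adjacencies themselves (edges, not shared neighbours) prevent a six-vertex induced path, and that requires designing the gadgets and their attachments with this in mind and then doing the case analysis you explicitly defer. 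As written, the proposal therefore has a genuine gap at precisely the step you flag as the crux, and the one concrete idea offered to bridge it does not work.
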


The only cases when the complexity of $k$-coloring $P_t$-free graphs is not known is when $k=4$, $t=6$, or when $k=3$ and $t \geq 8$. 
Our contribution is an approximation algorithm for the latter case. This line of research was first suggested to us by Chuzhoy~\cite{chuzhoy}. 

\paragraph{Approximation.}
The hardness of approximating the $k$-coloring problem has been in the focus of the research on approximation algorithms.
Dinur, Mossel and Regev \cite{IMR09} proved that coloring a 3-colorable graph with $C$ colors, where $C$ is any constant, is NP-hard assuming a variant of the Unique Games Conjecture. More precisely, the assumption is that a certain label cover problem is NP-hard (where the label cover instances are what the authors call $\alpha$-shaped).

On the upside, it is known how to color a 3-colorable graph with relatively few colors in polynomial time, and there has been a long line of subsequent improvements on the number of colors needed. 
The current state of the art, according to our knowledge, is the the following result, which combines a semidefinite programming result by Chlamtac~\cite{Chl07} with a combinatorial algorithm for the case of large minimum degree.

\begin{theorem}[Kawarabayashi and Thorup~\cite{kawa}] There is a polynomial time algorithm to color a 3-colorable $n$-vertex graph with $O(n^{0.19996})$ colors.
\end{theorem}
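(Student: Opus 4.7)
The plan is to combine semidefinite-programming-based coloring with Wigderson-style combinatorial recursion. The starting observation is that if a 3-colorable graph $G$ contains a vertex $v$ of degree $d$, then $N(v)$ is 2-colorable, so two fresh colors suffice on $N(v)$ and one more on $\{v\}$; iterating this as long as there is a vertex of degree at least $d$ peels off such vertices cheaply, and once the maximum degree drops below $d$ a greedy argument colors the rest with $d+1$ colors. Choosing $d = \Theta(\sqrt{n})$ already recovers Wigderson's classical $O(\sqrt{n})$-coloring bound, which serves as the baseline against which any improvement is measured.

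To push the exponent below $1/2$, I would plug in Chlamtac's SDP-based vector-coloring algorithm in place of greedy coloring in the low-degree regime. The vector 3-coloring relaxation assigns unit vectors to vertices with inner product $-1/2$ along edges; a randomized hyperplane-style rounding then produces either a large independent set (to be removed with one fresh color) or, after suitable preprocessing, a partial coloring of a dense subgraph. Removing and coloring such a set shrinks the remaining graph, and the process recurses. Balancing the Wigderson exponent, which improves as the degree threshold $d$ shrinks, against the SDP exponent, which degrades as $d$ shrinks because the SDP only gains when vertices have moderately high degree, yields a single optimal threshold $d = n^{\alpha}$; the careful arithmetic at this optimum is what produces the numerical exponent $0.19996$.

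The main obstacle is not the overall framework but the extraction step: Chlamtac's rounding only guarantees a large independent set when the graph exhibits a certain density or when the SDP solution has a sufficiently spread spectrum, and neither condition is automatic after Wigderson peeling. Kawarabayashi and Thorup's key contribution is a delicate combinatorial decomposition that, in only polylogarithmically many rounds, forces one of the two cases to hold on every remaining piece. I would therefore concentrate the bulk of the argument on designing this decomposition, and invoke both Chlamtac's SDP rounding and Wigderson's peeling as essentially black-box subroutines on the pieces produced by it.
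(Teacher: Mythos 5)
This statement is a \emph{cited} result (Kawarabayashi--Thorup, FOCS/LIPIcs 2014); the paper does not prove it, it only records it as background, together with a one-line remark that the result ``combines a semidefinite programming result by Chlamtac with a combinatorial algorithm for the case of large minimum degree.'' So there is no in-paper proof for your sketch to be measured against, and your job here should really have been to recognize the attribution rather than to reconstruct the argument.

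On the merits, your outline is pointed in the right direction at the coarsest level --- Wigderson-style peeling of high-degree vertices, SDP/vector-coloring rounding in the low-degree regime, and a balancing of the two thresholds --- and this matches the paper's one-sentence gloss. But as a proof it has a genuine gap: everything that actually produces the exponent $0.19996$ is hidden inside the phrase ``delicate combinatorial decomposition \dots\ I would invoke \dots\ as essentially black-box subroutines.'' That decomposition is not a routine glue step; it is the technical heart of the Kawarabayashi--Thorup paper, and the exponent does not emerge from a simple two-parameter balance between a Wigderson threshold and an SDP threshold. Your description of the SDP side is also imprecise: the rounding used is not the plain Karger--Motwani--Sudan hyperplane rounding of the $-1/2$ vector relaxation, but a higher-level (Lasserre/SoS hierarchy) relaxation and rounding scheme due to Chlamtac, combined with structural arguments about sparse neighborhoods. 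In short, what you have written is a plausible expository summary of the known result, not a proof, and the step you defer is exactly the step that would need to be supplied.
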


In this work we combine these two lines of research and strive to use the structure of $P_t$-free graphs to give an approximation algorithm for the 3-coloring problem.
We are inspired by a result of Gy\'arf\'as, who proved the following.
\begin{theorem}[Gy\'arf\'as~\cite{gyarfas}] \label{thm:gyarfas} If $G$ is a graph with no induced subgraph isomorphic to $P_t$, then $\chi(G) \leq (t-1)^{\omega(G)-1}$. 
\end{theorem}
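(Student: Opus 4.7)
The plan is to prove the theorem by induction on $\omega := \omega(G)$. The base case $\omega = 1$ is trivial, since then $G$ is edgeless and $\chi(G) = 1 = (t-1)^0$. For the inductive step, suppose the bound holds for every $P_t$-free graph of clique number less than $\omega$, and let $G$ be a $P_t$-free graph with $\omega(G) = \omega \geq 2$; by handling connected components separately, we may assume $G$ is connected. Fix any vertex $v_0 \in V(G)$ and consider the BFS layers $L_i := \{u : d_G(v_0, u) = i\}$. Because every shortest path is induced and $G$ contains no induced $P_t$, no vertex lies at distance more than $t - 2$ from $v_0$, so there are at most $t - 1$ nonempty layers $L_0, L_1, \ldots, L_d$, and edges of $G$ run only inside a single layer or between consecutive layers.

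The goal would then be to establish the per-layer bound $\chi(G[L_i]) \leq (t-1)^{\omega - 2}$ for every $i$. Granted this, one assigns a disjoint palette of $(t-1)^{\omega-2}$ colors to each of the at most $t-1$ layers, producing a proper coloring of $G$ using at most $(t-1) \cdot (t-1)^{\omega-2} = (t-1)^{\omega-1}$ colors, as required. The bound on $\chi(G[L_i])$ is immediate for $i = 0$ (singleton), and for $i = 1$ it follows from the outer inductive hypothesis: since $L_1 = N(v_0)$, any clique $K$ in $G[L_1]$ extends by $v_0$ to a clique in $G$ of size $|K|+1$, whence $\omega(G[L_1]) \leq \omega - 1$ and the hypothesis yields $\chi(G[L_1]) \leq (t-1)^{\omega-2}$.

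The hard part, and the main obstacle of the proof, is establishing $\chi(G[L_i]) \leq (t-1)^{\omega-2}$ for $i \geq 2$, because in general $\omega(G[L_i])$ can still equal $\omega$ and the outer hypothesis is not directly applicable. My plan is to apply the same BFS decomposition recursively inside each $G[L_i]$: pick a new root $v_i \in L_i$, expose the nested BFS layers $L_i^0, L_i^1, \ldots$ of $G[L_i]$, handle $L_i^0$ and $L_i^1$ by the outer hypothesis exactly as above, and recurse on $L_i^j$ for $j \geq 2$. The delicate step is to verify that this nested recursion still yields the bound $(t-1)^{\omega-2}$ per layer of $G$, rather than compounding an extra factor of $(t-1)$ at every nesting depth. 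The natural tool for this is a path-lifting lemma: any long induced path living inside $G[L_i]$ can, by a careful choice of BFS parents, be extended upward into an even longer induced path of $G$, which then bounds the effective nesting depth in terms of $\omega$. Producing this path-lifting statement, and turning it into the tight per-layer chromatic bound, is where I expect the bulk of the proof effort to go.
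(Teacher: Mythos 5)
Your plan has the right overall shape at the top level, but the step you yourself flag as the hard part is a genuine gap, not a deferred detail, and the proposed repair does not close it. The outer inductive hypothesis only controls $P_t$-free graphs with clique number strictly less than $\omega$, and you correctly observe that $\omega(G[L_i])$ can equal $\omega$ when $i \geq 2$, so there is no way to invoke it there. The fix you propose --- nested BFS inside $G[L_i]$, controlled by a ``path-lifting lemma'' --- faces two concrete obstructions. First, a BFS tree path $R$ from $v_0$ to a new root $v_i \in L_i$ and an induced path $Q \subseteq L_i$ starting at $v_i$ need not concatenate to an induced path in $G$: the penultimate vertex of $R$, which lies in $L_{i-1}$, can have further neighbours on $Q$. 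Second, and more fundamentally, even a perfect lifting would bound the BFS \emph{radius} at each nesting level (already known to be at most $t-2$), not the nesting \emph{depth}; but your recursion only terminates once some recursive call sees a drop in clique number, and nothing in the layer structure forces that to happen within $\omega - 1$ levels. Concatenating the roots chosen at successive levels yields a path of only about $2$ extra vertices per level, so for $\omega$ small relative to $t$ no contradiction with $P_t$-freeness ever arises, and the factors of $t-1$ compound without bound.

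The argument that actually works --- and that the paper's Lemma~\ref{lem:one} is explicitly modelled on --- does not colour BFS layers independently, but instead performs a double induction on $\omega$ and $t$ driven by a pivot observation. For connected $G$ and $v \in V(G)$, set $Z = V(G) \setminus N[v]$, and for each component $C$ of $G[Z]$ pick a neighbour $w_C \in N(v)$ of $C$. If $v$ starts no induced $P_t$ in $G$, then $w_C$ starts no induced $P_{t-1}$ in $G[C \cup \{w_C\}]$, since prepending $v$ would give an induced $P_t$ starting at $v$. One then applies the induction with parameter $t-1$ (same $\omega$) to each $G[C \cup \{w_C\}]$, applies the induction with parameter $\omega - 1$ (same $t$) to $G[N(v)]$, which has clique number at most $\omega - 1$, colours $N(v)$ and $Z$ with disjoint palettes, and reuses a colour of $Z$ for $v$; the bound $\chi(G) \leq (t-1)^{\omega-2} + (t-2)^{\omega-1} \leq (t-1)^{\omega-1}$ then closes the induction. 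This pivot step, trading one unit of forbidden path length when descending into the components of $G - N[v]$, is precisely what your per-layer decomposition is missing.
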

Thus, for a graph with no $P_t$, we can check if it is $(t-1)^2$-colorable or not 3-colorable by checking whether it contains a $K_4$. For a connected graph, Theorem~\ref{thm:gyarfas} also holds if the requirement of being $P_t$-free is weakened to the assumption that there is a vertex $v$ in $G$ that does not start an induced $P_t$ in $G$. 
We use a technique similar to the proof of Theorem~\ref{thm:gyarfas} in the proof of our key lemma, Lemma~\ref{lem:one}. 
We take advantage, however, from the fact that the input graph is 3-colorable.
This allows us to improve the bound of $(t-1)^2$ on the number of colors given by Gy\'arf\'as' theorem.
We remark that our result is not an improvement of Theorem~\ref{thm:gyarfas}, but incomparable to it.

\paragraph{Our contribution.}
We prove the following.
\begin{theorem} \label{thm:main}
Let $t \in \mathbb{N}$.
There is an algorithm that computes for any 3-colorable $P_t$-free graph $G$
\begin{enumerate}[(a)]
\item a coloring of $G$ with at most $\max\sset{5,2\ceil{\frac{t-1}{2}}-2}$ colors, and a triangle of $G$, or 
\item a coloring of $G$ with at most $\max\sset{4,\ceil{\frac{t-1}{2}}+1}$ colors
\end{enumerate}
with running time $O((3^{t-2}+t^2)|E(G)|+|V(G)|)$.
\end{theorem}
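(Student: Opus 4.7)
My plan is to prove Theorem~\ref{thm:main} by a Gy\'arf\'as-style layering that exploits $\chi(G)=3$ and defers the structural heavy lifting to Lemma~\ref{lem:one}. First I would reduce to the case $G$ is connected, since components may be coloured independently and share a palette. For connected $G$, I would use Lemma~\ref{lem:one} to partition $V(G)$ into layers $V_0, V_1, \ldots, V_\ell$ with $\ell = O(t)$ such that edges of $G$ run only within a layer or between consecutive layers, and each layer has chromatic number bounded in a way that matches the two branches of the theorem: in the triangle-free case each $V_i$ is independent, and in the general 3-colourable case each $V_i$ is bipartite.

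From such a layered partition the colouring is assembled directly by alternating palettes between consecutive layers. In branch (b) (triangle-free) each layer uses one colour and the total is the number of layers, which Lemma~\ref{lem:one} will bound by $\lceil(t-1)/2\rceil+1$. In branch (a) (general 3-colourable) each layer uses a pair of colours, with some savings obtained by merging the singleton seed layer and by reusing palettes across non-adjacent layers, giving the total $2\lceil(t-1)/2\rceil-2$. If the layering process encounters a triangle (the reason branch (a) arises in the first place), it is returned as the witness required by the theorem. The small regime $t \le 6$ is a base case where the $\max$ with $4$ or $5$ kicks in; here I would invoke the Bonomo \emph{et al.}\ polynomial-time 3-colouring algorithm on the bounded-size problem, or directly brute-force.

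The main obstacle is constructing the layering, i.e.\ the content of Lemma~\ref{lem:one}. Naive BFS from an arbitrary vertex does not suffice, because layer widths can be arbitrary and interior edges of layers preclude the tight chromatic bounds needed. Instead, the layering must be grown from a carefully chosen seed (which is why the algorithm needs to enumerate short induced paths and their 3-colourings, and why the exponential factor $3^{t-2}$ appears in the running time). The Gy\'arf\'as-style induction underlying the lemma substitutes the hypothesis $\chi(G[N(v)]) \le 2$ for the usual clique-number argument, which is how the bound $(t-1)^{\omega-1}$ of Theorem~\ref{thm:gyarfas} is sharpened to a function linear in $t$. A delicate sub-point is handling the boundary between consecutive layers so that the bipartition of one layer is compatible with the bipartition of the next under the alternating-palette scheme; here I would use list-colouring bookkeeping to propagate the constraints.

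For the running time: enumerating candidate seeds --- short induced paths of length at most $t-2$ together with a target 3-colouring --- costs $O(3^{t-2}|E(G)|)$, because there are $O(|V(G)|)$ starting vertices, each extendable into at most $3^{t-2}$ coloured paths verifiable in $O(|E(G)|)$ time by neighbourhood scans. Constructing the layering around a correct seed and detecting triangles cost $O(t^2|E(G)|)$, and emitting the final colouring is $O(|V(G)|)$, summing to $O((3^{t-2}+t^2)|E(G)|+|V(G)|)$ as claimed.
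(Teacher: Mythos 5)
Your proposal misidentifies which lemma does the work and, as a result, the central factor-of-two saving is unaccounted for. Lemma~\ref{lem:one} alone gives a $(t-2)$-colouring (or $(2t-5)$ with a triangle); it does not give $\lceil(t-1)/2\rceil+1$. The improvement from $t$ to roughly $t/2$ is the entire content of Lemma~\ref{lem:main}, which you never invoke. There, the algorithm does not produce a layered partition of $V(G)$; it builds a set $S$ of size $t-2$ (containing $v$) and colours only the part of $G$ that lies \emph{outside} $F(S)\cup N(F(S))$, with $\max\{1,\lceil(t-1)/2\rceil-2\}$ colours (or $\max\{2,2\lceil(t-1)/2\rceil-5\}$ with a triangle). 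The halving comes from a specific trick: when Corollary~\ref{cor:lem1} is applied in two separate components $C,C'$ adjacent to distinct parts of $N(v)$, an induced path of length $\lceil(t-1)/2\rceil$ in both yields an induced $P_t$ through $v$, so at least one of the two must instead return a colouring; that is why it suffices to run the inner calls with the halved parameter. Nothing in a single pass of Lemma~\ref{lem:one} gives you this.

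Your ``alternating palettes'' picture also does not match the numbers. If the layers were genuinely independent (resp.\ bipartite) with edges only inside or between consecutive layers, two (resp.\ four) colours would suffice globally, and conversely if edges within layers are allowed nothing pins down the per-layer chromatic number; either way the count ``one colour per layer, total equal to the number of layers'' is not what any layering of this kind yields. What the paper actually does in Theorem~\ref{thm:main} is much shorter: apply Lemma~\ref{lem:main} with $k=t$ (only outcomes (d)/(e) can occur since $G$ is $P_t$-free and 3-colourable), then 3-colour $F(S)\cup N(F(S))$ by brute force over the $3^{t-2}$ colourings of $S$ — the colours of $F(S)$ are then forced, $N(F(S))$ becomes a 2-list-colouring instance solved via \textsc{2Sat} — and add these three fresh colours to the palette from Lemma~\ref{lem:main}. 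This gives exactly $\max\{4,\lceil(t-1)/2\rceil+1\}$ or $\max\{5,2\lceil(t-1)/2\rceil-2\}$. Your intuition that the $3^{t-2}$ factor comes from enumerating coloured ``seeds'' is in the right spirit (indeed $S$ is such a seed, built from nested root vertices), but the enumeration is over colourings of a set constructed by Lemma~\ref{lem:main}, not over induced paths in $G$, and your proposed fallback to Bonomo \emph{et al.}\ for small $t$ is unnecessary and not part of the argument. As written, the proof has a genuine gap: without Lemma~\ref{lem:main} (or a replacement argument achieving the same halving plus the $F(S)$ mechanism), the claimed colour bound does not follow from Lemma~\ref{lem:one}.
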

There is a variant of this problem where we replace the requirement that $G$ is $P_t$-free with the weaker restriction that $G$ has at least one vertex which is not a starting vertex of a $P_t$ in each connected component. We give an algorithm for this harder problem as well, with a worse approximation bound, see Lemmas~\ref{lem:one} and~\ref{lem:rt1} below. Additionally, we give a hardness result, Theorem~\ref{lem:hardness}, to show that Lemma~\ref{lem:one} can probably not be improved.

We remark that our algorithm can easily be implemented so that it
takes an arbitrary graph as its input.  It then either refutes the
graph by outputting that it contains a $P_t$ or that it is not
3-colorable or computes a coloring as promised by
Theorem~\ref{thm:main}.  In the case that the graph is refuted for not being
3-colorable, the algorithm can output a certificate that is easily
checked in polynomial time. If the graph is refuted because it
contains an induced $P_t$, our algorithm outputs the path.


\section{Algorithm}

We start with a lemma that uses ideas from Theorem~\ref{thm:gyarfas} to color connected graphs in which some vertex does not start a $P_t$. It is exact up to $t = 4$; in Section~\ref{sec:hard} we show that 3-coloring becomes NP-hard for $t \geq 5$, which means that our result is tight in this sense. 

\begin{lemma} \label{lem:one} Let $G$ be connected, $v \in V(G)$, and $t \in \mathbb{N}$. There is a polynomial-time algorithm that outputs
\begin{enumerate}[(a)]
\item \label{it:not3} that $G$ is not 3-colorable, or
\item \label{it:path} an induced path $P_t$ starting with vertex $v$, or
\item \label{it:ccc} a $\max\sset{2, t-2}$-coloring of $G$, or
\item \label{it:ddd} a $\max\sset{3, 2t-5}$-coloring of $G$ and a triangle in $G$.
\end{enumerate}
\end{lemma}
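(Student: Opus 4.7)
The plan is to run breadth-first search from $v$, producing layers $L_0 = \{v\}, L_1, \ldots, L_k$. If $k \geq t-1$ we pick any $u \in L_{t-1}$ and trace back BFS parents, obtaining a sequence $v = u_0, u_1, \ldots, u_{t-1} = u$ with $u_i \in L_i$; since $L_i$ and $L_j$ share no edges whenever $|i-j| \geq 2$, this path is automatically induced, giving the desired $P_t$ starting at $v$ and output (\ref{it:path}). So we may henceforth assume $k \leq t-2$, i.e., at most $t-1$ non-empty BFS layers.

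In any 3-colouring of $G$ the vertex $v$ takes one colour and $N(v) = L_1$ must use the remaining two, so $G[L_1]$ is necessarily bipartite; we test this in polynomial time, and an odd cycle in $L_1$ together with $v$ forces $\chi(G) \geq 4$, giving output (\ref{it:not3}). Otherwise 2-colour $G[L_1]$ and turn to the remaining layers $L_2, \ldots, L_{t-2}$. We colour them using an ``alternating palette'' strategy: because $L_i$ and $L_j$ are non-adjacent when $|i-j| \geq 2$, layers of the same parity can re-use the same colour set, so it suffices to bound $\chi(G[L_i])$ for $i \geq 2$.

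The key structural claim is that in the triangle-free case every $G[L_i]$ is bipartite and, moreover, the extreme layers $L_1$ and $L_{t-2}$ are independent. The argument is by contradiction via BFS-trace-back combined with a walk along a putative odd cycle: given an induced odd cycle $C = c_1 c_2 \ldots c_{2m+1}$ inside some $L_i$, together with a BFS path $v = p_0, p_1, \ldots, p_{i-1}, p_i = c_1$, we extend by walking along $C$ and try to keep the result induced. The layer property handles all BFS-ancestors except $p_{i-1}$; triangle-freeness forbids $p_{i-1}$ from being adjacent to two consecutive cycle vertices, and a case analysis on its remaining possible adjacencies either prolongs the induced path to $t$ vertices (output (\ref{it:path})) or pins down a triangle on $p_{i-1}$ together with two cycle vertices (switching us to case (\ref{it:ddd})). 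In case (\ref{it:ddd}) each $G[L_i]$ is only known to be 3-colourable (from the global 3-colourability of $G$), and the same alternating-palette scheme, together with additional small-$t$ analysis to squeeze out wasted colours, yields the bound $\max\{3,2t-5\}$; in case (\ref{it:ccc}) the independence of $L_1$ and $L_{t-2}$ trims the generic alternating-palette bound of $4$ down to $\max\{2,t-2\}$. The main obstacle is carrying out the extension-along-the-cycle argument precisely: for each way in which $p_{i-1}$ (and other nearby BFS-ancestors) can attach to $C$, we must produce either a strictly longer induced path from $v$ or a triangle, and the case analysis has to be sharp enough to achieve the tight constants $\max\{2,t-2\}$ and $\max\{3,2t-5\}$ for small $t$ rather than the looser values that fall out of the generic alternating-palette argument.
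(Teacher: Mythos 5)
The BFS/layer approach you propose is genuinely different from the paper's. The paper's proof of Lemma~\ref{lem:one} is a clean recursion on $t$: remove $\{v\}\cup N(v)$, and for each component $C$ of the remainder pick a vertex $w_C\in N(v)$ adjacent to $C$ and recurse on $G[V(C)\cup\{w_C\}]$ with parameter $t-1$; the colorings of the components are then pasted together using the same $t-3$ (resp.\ $2t-7$) colors, one (resp.\ two) fresh colors go to $N(v)$, and $v$ reuses a color, yielding $t-2$ (resp.\ $2t-5$). Your alternating-palette idea across BFS layers is not how the reuse of colors is achieved in the paper.

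More importantly, there is a genuine gap in the proposal: the ``key structural claim'' that in the triangle-free case every $G[L_i]$ is bipartite is \emph{false}. Here is a counterexample. Let $L_0=\{v\}$, $L_1=\{w_1,w_2,w_3\}$ all adjacent to $v$ and pairwise non-adjacent, and let $L_2$ induce a $7$-cycle $c_1\cdots c_7$, with $w_1$ adjacent to $c_1,c_3,c_5$, $w_2$ adjacent to $c_2,c_4,c_6$, and $w_3$ adjacent to $c_7,c_2,c_5$. This graph is triangle-free (each $w_j$'s neighbourhood in the cycle is a stable set, and $L_1$ is a stable set), it is $3$-colorable, and a case check shows that no induced $P_6$ in it starts at $v$. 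Nevertheless $G[L_2]=C_7$ is not bipartite. The reason your extension argument fails to detect this is that triangle-freeness only bounds the number of cycle-vertices adjacent to the ancestor $p_{i-1}$; in the worst case $p_{i-1}$ can cover every second vertex of the cycle, so the longest gap you can walk along has length about $3$, and the induced path you build has only $i+3$ vertices. That reaches $t$ only when $i\ge t-3$, i.e.\ only in the last couple of BFS layers; for shallower layers $L_i$, no contradiction is obtained. Consequently, the generic alternating-palette count degrades to roughly $2\cdot 3=6$ colors (each layer is merely a subgraph of a $3$-colorable graph), which already exceeds $\max\{2,t-2\}$ for $t\in\{5,6,7\}$. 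The ``additional small-$t$ analysis'' you allude to cannot be made to work by sharpening the case analysis around $p_{i-1}$, because the bipartiteness conclusion it is supposed to deliver is simply untrue. A different mechanism for reusing colors across layers (or a recursive peeling as in the paper) is needed.
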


\begin{proof}
We prove this by induction on $t$. 
For  $t \leq 4$, let $Z = V(G) \setminus (\sset{v} \cup N(v))$. Consider a component~$C$ of $G[Z]$. By connectivity, there is a vertex $x\in V(C)$ such that $N(x)\cap N(v)\neq\emptyset$. Since $G$ has no $P_4$ starting at $v$, each neighbor of $x$ in $C$ is adjacent to all of $N(x)\cap N(v)$. Thus $N(y)\cap N(v)=N(x)\cap N(v)$ for every $y\in V(C)$. In particular, if $|C|\geq 2$, we found a triangle. 

Color $v$ with color $1$, and give each vertex in a singleton component of $Z$ color $1$. For each non-singleton component $C$ of $Z$, note that if $C$ is not bipartite, then $G$ is not 3-colorable  (and we have outcome~\eqref{it:not3}). So assume $C$ is bipartite, and color all vertices from one partition class with $1$. Call~$G'$ the subgraph of $G$ that contains all yet uncolored vertices. (So all remaining vertices of $Z$ from singleton components of $G'-N(v)$.) 

If $G'$ has no edges, we can color $V(G')$ with color $2$ to obtain a valid $2$-coloring of $G$, and are done with outcome~\eqref{it:ccc}.
If $G'$ is bipartite and has an edge $xy$, then we can color $V(G')$ with colors $2$ and $3$ to obtain a valid $3$-coloring of $G$. Observe that if $x,y\in N(v)$, then $G$ has a triangle, and that otherwise, we can assume $x\in N(v)$ and $y\in Z$. In $G$, vertex $y$ belongs to a non-trivial component of $G- N(v)$; thus, as noted above, $G$ has a triangle containing $xy$. In either case, we have outcome~\eqref{it:ddd}.

Now assume $G'$ is not bipartite, that is, $G'$ has an odd cycle  $C_\ell$, on vertices $c_1,\ldots, c_\ell$, say. Then,  for each $c_i$ lying in $Z$, we know that in $G$, there is a vertex $c'_i$ (from the non-trivial component of $Z$ that $c_i$ belongs to) which is adjacent to all three of $c_{i-1}, c_i, c_{i+1}$ (mod $\ell$). So in any valid $3$-coloring of $G$, vertices $c_{i-1}$ and $c_{i+1}$ have the same color. Thus we need to use at least three colors on $V(C_\ell)\cap N(v)$, which makes it impossible to color $v$, unless we use a $4$th color, and we can output~(\ref{it:not3}). This proves the result for $t \leq 4$.
 
Now let $t \ge 5$, and assume that the result is true for all smaller
values of $t$.  For every component $C$ of $Z= V(G) \setminus
(\sset{v} \cup N(v))$, there is a vertex $w_C$ in $N(v)$ with
neighbors in $C$.  We apply the induction hypothesis (for $t-1$) to
$G_C:=G[V(C) \cup \sset{w_C}]$. If this subgraph is not 3-colorable,
neither is $G$ (and we have outcome~\eqref{it:not3}).  If there is an
induced $P_{t-1}$ starting at $w_C$, then we can add $v$ to this path
and have found an induced $P_t$ in $G$ starting at $v$, giving
outcome~\eqref{it:path}.  If neither outcome~\eqref{it:not3} nor
outcome~\eqref{it:path} occured in any component, then each component
$C$ of $G[Z]$ (without $w_C$) can be colored with $2(t-1)-5$ colors if
the algorithm detected a triangle in $G_C$, and with $t-3$ colors
otherwise.

If $N(v)$ is a stable set, and no triangle was detected, then we color each component of $G[Z]$ with $t-3$ colors (which can be repeated), and use one more color for $N(v)$, and repeat one of the colors from $Z$ for $v$ to obtain a $(t-2)$-coloring of $G$, obtaining outcome~\eqref{it:ccc}.

Therefore, we may assume that the algorithm detected a triangle in $G[\{v\} \cup N(v)]$ or some $G_C$, and we output this triangle. 
If $G[N(v)]$ is not bipartite, then $G$ is not 3-colorable. 
Otherwise, we color each component of $G[Z]$ with the same at most $2(t-1)-5$ colors, color $N(v)$ with two new colors, and repeat a color from $Z$ for $v$. 
Then, this yields a coloring of $G$ with $2t-5$ colors, and we found a triangle, which is outcome~\eqref{it:ddd}. 
\end{proof}

In the following, we will use a slightly modified version of this lemma:
\begin{cor} \label{cor:lem1} Let $G$ be connected, $v \in V(G)$, and $t \in \mathbb{N}$. Then, there is a polynomial-time algorithm that outputs
\begin{enumerate}[(a)]
\item that $G$ is not 3-colorable, or
\item an induced path $P_t$ starting with vertex $v$, or
\item a $\max\sset{1, t-2}$-coloring of $G- v$, or
\item a $\max\sset{2, 2t-5}$-coloring of $G- v$ and a triangle in $G$.
\end{enumerate}
\end{cor}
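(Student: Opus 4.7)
The plan is to invoke Lemma~\ref{lem:one} on $(G,v,t)$ and propagate its outcomes. Outcomes~(a) and~(b) of the lemma are already outcomes~(a) and~(b) of the corollary, so nothing needs to be done for them. In outcomes~(c) and~(d) the lemma produces a proper coloring of $G$ with $\max\sset{2,t-2}$ and $\max\sset{3,2t-5}$ colors respectively, and in~(d) also a triangle of $G$. My plan is to output the restriction of that coloring to $V(G)\setminus\sset{v}$, together with the same triangle in case~(d).

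For $t\geq 4$ the verification is immediate, because $\max\sset{2,t-2}=\max\sset{1,t-2}$ and $\max\sset{3,2t-5}=\max\sset{2,2t-5}$; the restriction uses at most as many colors as the full coloring, and the lemma's triangle is still a triangle of $G$. The only situation in which the two bounds differ is $t\leq 3$. For $t=2$ the absence of an induced $P_2$ starting at $v$ means $v$ has no neighbor, so connectedness of $G$ forces $G=\sset{v}$ and $G-v$ is empty; the bound in~(c) then holds vacuously.

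The main (if minor) obstacle is therefore $t=3$, for which I would inspect the base case of the proof of Lemma~\ref{lem:one} directly. Since no induced $P_3$ starts at $v$, every neighbor of a vertex in $N(v)$ lies in $N[v]$, so $V(G)\subseteq N[v]$ and $G-v=G[N(v)]$. In the branch of the lemma that leads to outcome~(c), the auxiliary graph $G'$ equals $G[N(v)]$ and has no edges, so $G-v$ is edgeless and admits a $1$-coloring. In the branch leading to outcome~(d), the 3-colorability of $G$ forces $G[N(v)]$ to be bipartite, so $G-v$ admits a $2$-coloring, and any edge of $G-v$ together with $v$ is a triangle of $G$. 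This matches the bounds $\max\sset{1,t-2}$ and $\max\sset{2,2t-5}$ of the corollary in all remaining cases, while the running time matches that of Lemma~\ref{lem:one}.
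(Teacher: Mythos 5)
Your proposal is correct and takes essentially the same approach as the paper: invoke Lemma~\ref{lem:one} directly for $t\ge 4$ (where the color bounds coincide after restricting to $G-v$), and handle $t\le 3$ by observing that the absence of a $P_t$ from $v$ forces $V(G)\subseteq N[v]$, reducing to a bipartiteness check on $G-v$. The only cosmetic difference is that the paper states this small-$t$ argument self-containedly rather than by re-inspecting the base case of Lemma~\ref{lem:one}, but the underlying reasoning is identical.
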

\begin{proof} This is a direct consequence of Lemma~\ref{lem:one} unless $t \leq 3$. 
If $t \leq 3$, then we can find an induced $P_t$ starting at $v$ unless $v$ is adjacent to every vertex in $G - v$. 
So assume $v$ is adjacent to every other vertex. If $G - v$ is not bipartite, then $G$ is not 3-colorable. Otherwise, $G - v$ is $2$-colorable and the algorithm detects a triangle, or $G-v$ is $1$-colorable. 
\end{proof}

\begin{lemma}\label{lem:rt1}
The algorithm from Lemma~\ref{lem:one} (and from Corollary~\ref{cor:lem1}) can be implemented with a running time of $O(t|E(G)|)$ for a connected input graph $G$.
\end{lemma}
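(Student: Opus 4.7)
The plan is to show by induction on $t$ that the algorithm of Lemma~\ref{lem:one}, invoked on a connected graph $H$ with root $v$ and parameter $t$, terminates in time $O(t|E(H)|)$. The two ingredients are a linear-time implementation of the work done at a single recursive call (exclusive of its children) and the edge-disjointness of the subproblems spawned by a call.

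First I would describe a per-call implementation running in $O(|E(H)|)$ time. The steps are: (a) identify $N(v)$ and $Z=V(H)\setminus(\sset{v}\cup N(v))$ by a single scan of $v$'s adjacency list and mark the vertices of $N(v)$; (b) one BFS on $H[Z]$ to compute its connected components and, simultaneously, to select for each component $C$ a vertex $w_C\in N(v)$ adjacent to $C$ by examining the marked neighbors of the vertices of $V(C)$ until such a $w_C$ is found; (c) one BFS on $H[N(v)]$ to test bipartiteness and to look for a triangle; (d) merge the colorings returned by the recursive calls with a constant number of label shifts. Each step runs in $O(|V(H)|+|E(H)|)$ time, and since $H$ is connected this is $O(|E(H)|)$ unless $|V(H)| \leq 1$, in which case the work is constant. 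For the base case $t \leq 4$ of Lemma~\ref{lem:one} (and $t \leq 3$ of Corollary~\ref{cor:lem1}), the argument that inspects components of $Z$, locates triangles, and certifies non-$3$-colorability through odd cycles in $G'$ can be implemented analogously in $O(|E(H)|)$ time by a fixed number of BFS traversals.

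Second, I would observe that the recursive subproblems are pairwise edge-disjoint. The subgraph $G_C = H[V(C)\cup\sset{w_C}]$ contains exactly the edges of $H$ inside $V(C)$ together with the edges from $w_C$ to $V(C)$. Since the components $C$ have pairwise disjoint vertex sets in $Z$, no edge of $H$ lies in two different $G_C$'s, even if the same vertex of $N(v)$ is selected as $w_C$ for several components. Moreover, no $G_C$ contains an edge incident to $v$ or an edge within $N(v)$. Therefore $\sum_C |E(G_C)| \leq |E(H)|$, which gives the recurrence $T(H,t) \leq c|E(H)| + \sum_C T(G_C,t-1)$ for some constant $c$, together with $T(H,t_0) \leq c|E(H)|$ at the base $t_0$. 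A routine induction on $t$ yields $T(H,t) \leq c\,t\,|E(H)|$.

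The main obstacle is the bookkeeping in step~(b): selecting each $w_C$ in time proportional to the edges explored within its component, rather than rescanning the entire set $N(v)$ once per component. Marking $N(v)$ once at the outset of the call makes this possible in a single BFS sweep of $H[Z]$. After that, the edge-disjointness of the $G_C$ turns the analysis into a clean telescoping over the $O(t)$ levels of recursion, and Corollary~\ref{cor:lem1} inherits the bound since its statement is obtained from Lemma~\ref{lem:one} by a constant-time wrapper around the same procedure.
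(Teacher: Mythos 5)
Your proposal is correct and follows essentially the same strategy as the paper's proof: do $O(|E(G)|)$ work at each level of recursion, observe that the subgraphs $G_C$ spawned at a given call have pairwise disjoint edge sets summing to at most $|E(G)|$, and unroll the recurrence over the $O(t)$ levels to get $O(t|E(G)|)$. Your treatment of the bookkeeping for selecting the vertices $w_C$ (marking $N(v)$ once and scanning edges from $Z$ rather than iterating over $N(v)$ per component) is a slightly more explicit account of what the paper leaves implicit, but the argument is the same.
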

\begin{proof}
For $t\leq 4$, we can compute $v, N(v)$ and $Z = V(G) \setminus (\sset{v} \cup N(v))$ in time $\mathcal{O}(|E(G)|)$.  The components of $Z$ can be found in linear time. By going through each vertex $w$ in $N(v)$, and for each such $x$, going through each component $C$ of $Z$ following a connected enumeration of $V(C)$, we can check that $w$ has exactly $0$ or $|V(C)|$ neighbors; if this is not true for some component $C$, then we have found a $P_t$ starting at $v$, obtaining outcome~(\ref{it:path}).

Otherwise, color $v$ with color 1, as well as all components in $Z$ of size 1. If a component $C$ contains two or more vertices, then we check if it is bipartite (in linear time); if not, then since there is a neighbor $w$ of $C$ in $N(v)$ and $w$ is complete to $C$, we output that $G$ is not 3-colorable for outcome~(\ref{it:not3}). If $C$ is bipartite, we choose one of the partition classes of the bipartition, and give all vertices in this class color $1$. 

Let $G'$ be the remaining graph after removing all vertices colored so far. We check if $G'$ has an edge; if not, then we can give a $2$-coloring of $G$  and output~(\ref{it:ccc}). If $G'$ has an edge $xy$, then check if $G'$
 is bipartite. If so, we can get a valid 3-coloring of $G$. Moreover, $xy$ lies in a triangle (either because $x,y\in N(v)$ or because $x$ and $y$ have a common neighbour in $Z$), and we can output~(\ref{it:ddd}). So assume we found that $G'$ is not bipartite, that is we found an odd cycle $C_\ell$ in $G'$, on vertices $c_1,\ldots, c_\ell$, say.
For each $c_i\in Z\cap V(G')$, there is a vertex $c'_i\in Z\cap V(G)$ adjacent to all three of $c_{i-1}, c_i, c_{i+1}$ (mod $\ell$), hence we can output~(\ref{it:not3}), as vertices $c'_i$, $V(C_\ell)$ and $v$ induce an obstruction to $3$-coloring $G$.

Now let $t \geq 5$. We compute $N(v)$ in time $|d(v)|$, compute components of $G - (\sset{v} \cup N(v))$ in linear time, check if $N(v)$ is bipartite in linear time (if not, return that $G$ is not 3-colorable), check if $N(v)$ contains two adjacent vertices, and correspondingly 1 or 2-color $N(v)$. Then we go through $N(v)$ to find a neighbor $w_C$ for each component $C$ of $G - (\sset{v} \cup N(v))$ and run the algorithm with vertex $w_C$ and parameter $t-1$ on the component $C$. 

If the outcome in any component $C$ is an induced $P_{t-1}$ starting at $w_c$, we can add $v$ at the start of the path and get outcome~(\ref{it:path}). If some component is not 3-colorable, then neither is $G$, giving outcome~\eqref{it:not3}. Otherwise, we find the necessary colorings (and possibly a triangle) to output~(\ref{it:ccc}) or~(\ref{it:ddd}).

Note that no edge occurs in two components, therefore we require $O(|E(G)|)$ processing time before using recursion and a total amortized running time of at most $O((t-1)|E(G)|)$ for recursive calls of the algorithm, which implies the overall running time. 
\end{proof}

Let $S$ be a set of vertices of $G$, then we let $F(S)$ denote the smallest set so that $S \subseteq F(S)$ and no vertex in $G - F(S)$ has two adjacent neighbors in $F(S)$. $F(S)$ can be computed by repeatedly adding vertices that have two adjacent neighbors in the current set. In a 3-coloring, the colors of the vertices in $S$ uniquely determine the colors of all vertices in $F(S)$. 

\begin{lemma} \label{lem:main}
Let $G$ be connected, $v \in V(G)$ and $k,t\in \mathbb{N}$. There is a polynomial-time algorithm that outputs 
\begin{enumerate}[(a)]
\item \label{it:not3c} that $G$ is not 3-colorable, or
\item \label{it:pt} an induced path $P_t$ in $G$, or
\item \label{it:pk} an induced path $P_k$ in $G$ starting in $v$, or
\item a set $S$ of size $\max\sset{1,k-2}$ with $v \in S$, and a $\max\sset{1,\ceil{\frac{t-1}{2}}-2}$-coloring of $G - (F(S) \cup N(F(S)))$, or 
\item a set $S$ of size $\max\sset{1,k-2}$ with $v \in S$, and a $\max\sset{2,2\ceil{\frac{t-1}{2}}-5}$-coloring of $G - (F(S) \cup N(F(S)))$, and a triangle in $G$.
\end{enumerate}
\begin{proof}
We prove this by induction on $k$. If $k \leq 3$, then this follows from Corollary~\ref{cor:lem1} with input $k$ and vertex $v$, by setting $S = \sset{v}$ and noting that then $F(S)=S$. 

Now let $k > 3$. Note that we can assume $k \leq t$, because otherwise we can run the algorithm for $k$ set to~$t$, and all outcomes except (c) will be valid for the original $k$ as well, and if we do get outcome (c), we can use it as outcome (b) instead. Furthermore, if $3 \leq k \leq \ceil{\frac{t-1}{2}}$, then the result follows from Corollary~\ref{cor:lem1} with input $k$ and vertex $v$, and setting  $S = \sset{v}$. 

Consider $Z = V(G) \setminus (N(v) \cup \sset{v})$. Let $\mathcal{C} = \sset{C_1, \dots, C_r}$ be the list of components of $G[Z]$, and let $\mathcal{D} = \sset{D_1, \dots, D_l}$ be the list of components of $G[N(v)]$. We now describe a procedure where at each step, we color one of the components of  $\mathcal C$, and then put it aside, to go on working with the remaining graph, until one component $D \in \mathcal{D}$ has neighbors in all remaining components of $\mathcal{C}$. 

The details are as follows. While there is not a single component in $\mathcal{D}$ with neighbors in every component of $\mathcal{C}$, let $D, D' \in \mathcal{D}$, $C, C' \in \mathcal{C}$ so that $C$ has a neighbor $x$ in $D$ but no neighbor in $D'$, and $C'$ has a neighbor $x'$ in $D'$ but no neighbor in $D$. To see this, choose a components $D \in \mathcal{D}$ that has neighbors in as many components of $\mathcal{C}$ as possible. If some $C' \in \mathcal{C}$ has no neighbor in $D$, then there is a component $D' \in \mathcal{D}$ with a neighbor in $C'$ by connectivity. But $C'$ has a neighbor in $D'$ but not $D$, so by choice of $D$, $D'$ cannot have a neighbor in all components $C \in \mathcal{C}$ in which $D$ has a neighbor; thus let $C$ be a component in $\mathcal{C}$ so that $D$ has a neighbor in $C$ and $D'$ does not. These are the desired components. 
 
Then, we apply Corollary~\ref{cor:lem1} to $\sset{x} \cup C$ (with parameter $\ceil{\frac{t-1}{2}}$ and vertex $x$) and to $\sset{x'} \cup C'$ (with parameter $\ceil{\frac{t-1}{2}}$ and vertex $x'$). If either of these graphs is not 3-colorable, then $G$ is not 3-colorable. If,  in both cases, there is an induced $P_{\ceil{\frac{t-1}{2}}}$ starting at $x$ and at $x'$, respectively, then, since $x$ has no neighbors in $C' \cup D'$ and $x'$ has no neighbors in $D \cup C$, we can combine them, using the path $xvx'$, to obtain  an induced $P_{2\ceil{\frac{t-1}{2}} + 1}$ in $G$, which contains an induced $P_t$. Thus, we can assume that for at least one of the two components, we found a coloring instead. In particular, we found a coloring of $C$ or of $C'$ with $\max\sset{1, \lceil\frac{t-1}{2}\rceil - 2}$ colors, or a triangle in $G$, and a coloring  of $C$ or of $C'$ with $\max\sset{2, 2\ceil{\frac{t-1}{2}}-5}$ colors. We then remove the component with the coloring and continue. 

Finally, we arrive at a point where there is a component $D \in \mathcal{D}$ that has neighbors in all remaining components of $\mathcal{C}$. Note that if $S$ includes $v$ and any vertex $x \in D$, then $F(S) \supseteq D$ and thus $N(D) \subseteq F(S) \cup N(F(S))$. Therefore, we call a remaining component of $\mathcal{C}$ \emph{good} if it is contained in $N(D)$, and \emph{bad} otherwise. Our goal is to find a vertex $x \in D$ with neighbors in all bad components. 

While there is no vertex in $D$ with neighbors in all bad components,
we can find two bad components $C,C'$ among the remaining components
of $\mathcal{C}$ such that $C$ has a neighbor $y$ in $D$, $C'$ has a
neighbor $y'$ in $D$, $y$ has no neighbors in $C'$ and $y'$ has no
neighbors in $C$. As before, we can find these components by choosing $y$ with neighbors in as many bad components as possible, and then letting $y'$ be a vertex with a neighbor in a bad component $C'$ in which $y$ does not have a neighbor. Consequently, $y'$ has no neighbor in at least one bad component $C$ in which $y$ does have a neighbor. 

As $C$ and $C'$ are bad, there exist components $E$ and $E'$, of $C
\setminus N(D)$, and of $C' \setminus N(D)$, respectively.  Let $x$ be
the first vertex on a shortest path $P$ from $E$ to $y$, and define
$x'$ and $P'$ analogously. Apply Corollary~\ref{cor:lem1} to
$G[\sset{x} \cup E]$ (with parameter $\ceil{\frac{t-2}{2}}$ and vertex
$x$) and to $G[\sset{x'} \cup E']$ (with parameter
$\ceil{\frac{t-2}{2}}$ and vertex~$x'$). If either of these two graphs
is not 3-colorable, then $G$ is not 3-colorable. If, in both cases,
there is an induced $P_{\ceil{\frac{t-2}{2}}}$, say $P$ starting at $x$ and $P'$ starting at
$x'$, respectively, then we can combine these paths to an induced
path of length at least $t$ by taking $xPyy'P'x'$ or
$xPyvy'P'x'$ (depending on whether $yy'$ is an edge or
not).  Thus, we can assume that for at least one of $G[\sset{x} \cup
E]$, $G[\sset{x'} \cup E']$, we found a coloring instead. In
particular, we found a coloring of $E$ or of $E'$ with $\max\sset{1,
  \lceil\frac{t-2}{2}\rceil - 2}$ colors, or a triangle in $G$, and a
coloring with $\max\sset{2, 2\ceil{\frac{t-2}{2}}-5}$ colors.  We then
remove the component with the coloring and continue.

When this terminates, there is a single vertex $v' \in D$ that has neighbors in all remaining components of $\mathcal{C}$, except possibly those contained in $N(D)$. Let $V'$ be the set of vertices in those components. Then we can apply the induction hypothesis with $k-1$, $t$, and $v'$ to $G[V' \cup \sset{v'}]$. If this graph is not 3-colorable, neither is $G$. If it contains an induced path $P_t$, so does $G$. If it contains an induced path $P_{k-1}$ starting in $v'$, then we can add $v$ to this path to obtain an induced path $P_k$ starting in $v$. If there is a set $S$ of size $k-3$ with $v' \in S$, and a $\max\sset{1,\ceil{\frac{t-1}{2}}-2}$-coloring of $G - (F(S) \cup N(F(S)))$ or a $\max\sset{2,2\ceil{\frac{t-1}{2}}-5}$-coloring of $G - (F(S) \cup N(F(S)))$ and a triangle, then we proceed as follows.
We add $v$ to $S$, and now $S$ has size $k-2$. Moreover, since both $v$ and $v'$ in $S$, we know that $D \in F(S)$, thus all vertices in $\sset{v} \cup N(v) \cup D \cup N(D)$ are in $F(S) \cup N(F(S))$. We colored different components of $Z \setminus N(D)$ at different stages, but we can reuse the colors used on these components. Therefore, this leads to outcome (d) or~(e), depending on if the algorithm detected a triangle at any stage. 
\end{proof}
\end{lemma}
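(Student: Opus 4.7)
The natural plan is induction on $k$. For $k \leq 3$, Corollary~\ref{cor:lem1} applied to $G$ with parameter $k$ and vertex $v$ already delivers one of the five desired outcomes with $S = \sset{v}$, noting that $F(\sset{v}) = \sset{v}$. The same corollary handles the range $3 \leq k \leq \ceil{\frac{t-1}{2}}$, since the coloring it produces already fits the target bound, and one may freely assume $k \leq t$ by running the algorithm with $k$ replaced by $t$ and reinterpreting outcome~\eqref{it:pk} as outcome~\eqref{it:pt} if it occurs.

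For the inductive step I would partition $V(G) \setminus \sset{v}$ into $N(v)$ and $Z$ and work with the bipartite incidence structure between the components $\mathcal{D}$ of $G[N(v)]$ and the components $\mathcal{C}$ of $G[Z]$. The central idea is a \emph{doubling trick}: if two components $C, C' \in \mathcal{C}$ are \emph{separated} by components $D, D' \in \mathcal{D}$ (meaning $C$ attaches to $D$ via some $x \in D$ but not to $D'$, while $C'$ attaches to $D'$ via some $x' \in D'$ but not to $D$), then applying Corollary~\ref{cor:lem1} to $G[\sset{x} \cup C]$ and $G[\sset{x'} \cup C']$ with parameter $\ceil{\frac{t-1}{2}}$ either refutes 3-colorability, or produces two half-length induced paths which stitch together through the path $x v x'$ into an induced $P_t$, or yields a coloring of one of $C$ or $C'$ with the target number of colors that we set aside. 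Iterating this clean-up, we reach a stage where some $D \in \mathcal{D}$ has neighbors in every remaining $C \in \mathcal{C}$.

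A second round of the same trick then runs inside $D$: I would call a remaining component $C$ \emph{good} if $C \subseteq N(D)$ and \emph{bad} otherwise, and for a bad $C$ pick a component $E$ of $C \setminus N(D)$ together with a vertex $x$ obtained as the first vertex on a shortest path from $E$ to an attachment $y \in D$. Apply Corollary~\ref{cor:lem1} to $G[\sset{x} \cup E]$ with parameter $\ceil{\frac{t-2}{2}}$ on two such bad components $C, C'$ separated by distinct attachment vertices $y, y' \in D$; either we build an induced $P_t$ by concatenating as $xPyy'P'x'$ or $xPyvy'P'x'$ depending on whether $yy'$ is an edge, or we color and remove one side. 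Iterating produces a single $v' \in D$ with a neighbor in every remaining bad component, and I would then apply the induction hypothesis to the subgraph induced by $v'$ together with the remaining components using parameters $k-1$, $t$, $v'$, finally adjoining $v$ to the recursively produced set $S'$ of size $k-3$ to obtain $S$ of size $k-2$.

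The step I would treat most carefully is verifying that $F(S) \cup N(F(S))$ really absorbs everything that is not explicitly colored. The key observation is that since $v, v' \in S$ and $D$ is the component of $G[N(v)]$ containing $v'$, membership in $F(S)$ can be propagated along any path inside $D$ from $v'$: at each step, the next vertex of the path has two adjacent neighbors already in $F(S)$, namely its predecessor on the path and $v$ itself. Consequently $D \subseteq F(S)$ and hence $N(D) \subseteq F(S) \cup N(F(S))$, which swallows every good component. For the components colored and removed during the two doubling phases, the very separation property that enabled the doubling also guarantees that they have no neighbor in $D$ at the moment of removal, so they remain safely outside $F(S) \cup N(F(S))$. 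The remaining work, namely reusing colors across these pairwise disjoint removed components, reconciling the parameters $\ceil{\frac{t-1}{2}}$ and $\ceil{\frac{t-2}{2}}$ in the two phases, and propagating the triangle output, is routine bookkeeping once the structural picture above is established.
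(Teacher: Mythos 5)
Your proposal follows the paper's proof essentially step for step: the same base cases, the same two-phase doubling argument (first separating components of $\mathcal{C}$ across different components of $\mathcal{D}$ with parameter $\ceil{\frac{t-1}{2}}$, then refining inside $D$ with parameter $\ceil{\frac{t-2}{2}}$), the same recursive call with $k-1$ and $v'$, and the same propagation argument giving $D \subseteq F(S)$.

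One small inaccuracy worth flagging: your closing claim that a phase-one removed component ``has no neighbor in $D$ at the moment of removal, so [it] remains safely outside $F(S) \cup N(F(S))$'' is not quite right. The separation property only guarantees that, say, $C$ has no neighbor in $D'$; it \emph{does} have a neighbor in the $D$ of that iteration, and that $D$ may well be the final one, so $C$ can intersect $N(D) \subseteq F(S) \cup N(F(S))$. This does no harm: the lemma only asks for a coloring of $G - (F(S) \cup N(F(S)))$, and the colorings produced on removed components (which are pairwise non-adjacent, being in distinct components of $Z$ or of $Z\setminus N(D)$) restrict to a valid coloring of whatever part of them lies outside $F(S) \cup N(F(S))$. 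The correct statement is not that the removed pieces avoid $F(S) \cup N(F(S))$, but that every vertex \emph{outside} $F(S) \cup N(F(S))$ lies in a removed piece or in the domain handled by the recursive call.
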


\begin{lemma} \label{lem:rtmain} The algorithm from Lemma~\ref{lem:main} can be implemented with running time $O\left(k\ceil{\frac{t-1}{2}}|E(G)|\right)$ for a connected input graph $G$. 
\end{lemma}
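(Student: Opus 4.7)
The plan is to induct on $k$, showing that a single level of the recursive algorithm from Lemma~\ref{lem:main} can be executed in $O(\ceil{\frac{t-1}{2}}|E(G)|)$ time, and that the recursion has depth at most $k$; the claimed bound then follows by multiplication. In the base cases $k\leq\max\{3,\ceil{\frac{t-1}{2}}\}$ the algorithm from Lemma~\ref{lem:main} just invokes Corollary~\ref{cor:lem1} once, with parameter at most $\ceil{\frac{t-1}{2}}$, which by Lemma~\ref{lem:rt1} already runs within the required bound.

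For the inductive step I would split the per-level cost into a setup phase, the two while loops, and the recursive call. The setup, i.e.\ computing $v$, $N(v)$, the components $\mathcal{C}$ of $G[Z]$, the components $\mathcal{D}$ of $G[N(v)]$, and the bipartite incidence relation between $\mathcal{C}$ and $\mathcal{D}$, is carried out in $O(|E(G)|)$ time by a single BFS/DFS. Alongside this I would maintain, for each $D\in\mathcal{D}$, the list of components of $\mathcal{C}$ in which $D$ has a neighbor, together with a counter of its length; once the distinguished $D$ is fixed for the second loop, the analogous structure is maintained for each vertex of $D$ and the bad $\mathcal{C}$-components.

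The dominant contribution per level will come from the calls to Corollary~\ref{cor:lem1}. In each iteration of the first while loop we invoke it on $G[\{x\}\cup C]$ and $G[\{x'\}\cup C']$ with parameter $\ceil{\frac{t-1}{2}}$, and afterwards remove one of $C, C'$ from $\mathcal{C}$. By Lemma~\ref{lem:rt1} each such call costs $O(\ceil{\frac{t-1}{2}}|E(G[\{x\}\cup C])|)$, and because the components of $\mathcal{C}$ are vertex-disjoint and each is handed to Corollary~\ref{cor:lem1} at most $O(1)$ times before being removed, the sum over all iterations is $O(\ceil{\frac{t-1}{2}}|E(G)|)$. The second while loop calls Corollary~\ref{cor:lem1} on subcomponents $E\subseteq C\setminus N(D)$ with parameter $\ceil{\frac{t-2}{2}}\leq\ceil{\frac{t-1}{2}}$ and admits the same analysis. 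Finally, the recursive call is on the induced subgraph $G[V'\cup\{v'\}]$ with parameter $k-1$; by the inductive hypothesis it contributes $O((k-1)\ceil{\frac{t-1}{2}}|E(G[V'\cup\{v'\}])|)=O((k-1)\ceil{\frac{t-1}{2}}|E(G)|)$, yielding the claimed total at this level.

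The main obstacle I anticipate is amortizing the combinatorial selection that starts each iteration of the first while loop, where the algorithm wants a $D\in\mathcal{D}$ of maximum $\mathcal{C}$-incidence together with witnesses $D', C, C'$ satisfying the stated adjacency constraints. A naive scan of $\mathcal{D}\times\mathcal{C}$ would cost $\Theta(|\mathcal{D}|\cdot|\mathcal{C}|)$ per iteration and dominate the running time. The fix is to keep $\mathcal{D}$ in a bucket (or max-heap) keyed by its current $\mathcal{C}$-incidence count and to update it only when a component of $\mathcal{C}$ is removed; each update then touches only edges incident to the removed component, so the total bookkeeping across all iterations of the loop sums to $O(|E(G)|)$. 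The analogous bucket over the vertices of the fixed $D$, keyed by bad-component incidence, handles the second while loop within the same budget.
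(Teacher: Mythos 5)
The overall strategy you outline—induct on $k$, bound the per-level cost by $O\bigl(\ceil{\frac{t-1}{2}}|E(G)|\bigr)$, and charge the recursive call to the next level—matches the paper's accounting. The base case and the observation that the selection of $D,D',C,C'$ can be amortised with bucket/heap bookkeeping are both fine.

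However, there is a genuine gap in the central step. You assert that in the first while loop each component of $\mathcal{C}$ is handed to Corollary~\ref{cor:lem1} at most $O(1)$ times before being removed, and you use the pairwise disjointness of the $C_i$ to conclude that the total cost of these calls is $O\bigl(\ceil{\frac{t-1}{2}}|E(G)|\bigr)$. That claim is not justified by the while loop as written in Lemma~\ref{lem:main}: in each iteration you call Corollary~\ref{cor:lem1} on both $G[\{x\}\cup C]$ and $G[\{x'\}\cup C']$, but you are only guaranteed to remove \emph{one} of $C,C'$ (the one returning a colouring; the other may return a path and stays). Nothing in the loop prevents that surviving component from being re-selected in later iterations—possibly with a different anchor vertex in a different $D$—so a single large component could in principle accumulate $\Theta(|\mathcal{C}|)$ calls, pushing the per-level cost to $\Theta\bigl(|\mathcal{C}|\ceil{\frac{t-1}{2}}|E(G)|\bigr)$. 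The bucket/heap you propose addresses only the cost of \emph{finding} the next pair $(D,D',C,C')$, not the cost of repeatedly re-running Corollary~\ref{cor:lem1} on the same component.

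The paper closes exactly this hole by not implementing the while loop literally: it processes $D_1,\dots,D_r$ in a fixed order, and at step $j$ it invokes Corollary~\ref{cor:lem1} only on those $C_i$ whose sole remaining $\mathcal{D}$-neighbour is $D_j$; it deletes $C_i$ if a colouring is returned, retains the path otherwise, and deletes $D_j$ only if every such call returned a colouring. This guarantees each $C_i$ enters Corollary~\ref{cor:lem1} at most once, which is precisely what the desired $O\bigl(\ceil{\frac{t-1}{2}}|E(G)|\bigr)$ per-level bound requires. (Alternatively, one could patch your argument by fixing the anchor vertex $x_C\in N(v)\cap N(C)$ for each $C$ once and for all and memoising the outcome of Corollary~\ref{cor:lem1} on $G[\{x_C\}\cup C]$, but you would then need to verify that the memoised path is still usable whenever $C$ is re-selected with a different $D$; you neither state nor verify anything along these lines.) The same correction is needed for the second while loop over the components of $C\setminus N(D)$.
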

\begin{proof} For $k \leq 3$, this follows from Lemma~\ref{lem:rt1}. For $k > 3$, we compute $Z = V(G) \setminus (N(v) \cup \sset{v})$ and the components $C_1, \dots C_r$ of $G[Z]$ and the components $D_1, \dots, D_r$ of $G[N(v)]$ as in the proof of Lemma~\ref{lem:main}; all this can be done in linear time. 

Now, subsequently, for $j = 1, \dots, r$, we consider $D_j$, and some of the $C_i$ adjacent to it, and after possibly coloring and deleting these components $C_i$, we might also delete $D_j$. More precisely, for $j = 1, \dots, r$, 
 we consider those $C_i$ that only have neighbors in $D_j$. For each such $C_i$, choose a neighbor  $x_{ij}$ in $D_j$ and apply Corollary~\ref{cor:lem1} to $G[C_i \cup \sset{x_{ij}}]$. If this graph is not 3-colorable, then neither is $G$. If the algorithm returns a coloring (and possibly a triangle), then this is the coloring we will use in $C_i$, as explained in the proof of Lemma~\ref{lem:main}, so we can delete $C_i$. (But, if the algorithm found a triangle, we shall remember this triangle for a possible output, at least if it is the first one to be found.) Otherwise, the algorithm returns a path of length $\ceil{\frac{t-1}{2}}$, which we keep. After going through all $C_i$ with neighbors only in $D_j$, we delete $D_j$ if the algorithm always returned a coloring (or if there were no $C_i$ to consider). 
That is, we keep $D_j$ if and only if for some $i$ we found a path starting from $x_{ij}$ with interior in $C_i$. 

The amortized time it takes to process all $D_j$ is $O(\ceil{\frac{t-1}{2}}|E(G)|).$ This is so because every component~$C_i$ is used for the algorithm from Corollary~\ref{cor:lem1} at most once. 

In the end, if there are $D_j$, $D_{j'}$ that we did not delete, then there is a component $C_i$  that only has neighbors in $D_j$, and another component $C_{i'}$ that only  has  neighbors in $D_{j'}$, and both $C_i$ and $C_{i'}$ contain a $P_{\ceil{\frac{t-1}{2}}}$ in their interior, starting at $x_{ij}$ and $x_{i'j'}$ respectively. By connecting them using the middle segment $x_{ij} v x_{i'j'}$, we find a $P_t$ in $G$ that we can output as outcome~(\ref{it:pt}). 
Otherwise, there is only one $D_j$ left at the end. Since  whenever we deleted a component $D_{j'}$, we  ensured that each remaining~$C_i$ has a neighbor in some $D_{j}$ with $j \neq j'$, this means that $D_j$ has neighbors in all $C_i$ that we did not color yet. 

Let $Z'$ be the set of vertices of remaining components $C_i$, and let $Z'' = Z' \setminus N(D)$. 
Each component of $G[Z'']$ is contained in some component $C_i$ and thus it has a neighbor in $N(D)$. Therefore, we can apply the same argument as before to components of $G[Z'']$ and components of $N(D)$ with neighbors in them. Whenever the algorithm for Corollary~\ref{cor:lem1} outputs a coloring we keep it (and we also keep the possibly found triangle, if it is the first triangle to be found), and if it outputs that a component is not 3-colorable, then $G$ is not 3-colorable, and if there is a path $P_{\ceil{\frac{t-2}{2}}}$, we keep track of it. (Note that components of $N(D)$ that are not adjacent to $Z''$ get deleted automatically.) When this terminates, if there are still two components of $N(D)$, then there are two paths we can combine to a $P_t$ as in Lemma~\ref{lem:main}. Otherwise, a single component $D^*$ of $N(D)$ has a neighbor in all remaining components $C_1', \dots, C_s'$, so there is a vertex $v' \in D$ so that $\sset{v'} \cup D^* \cup C_1' \cup \dots \cup C_s'$ is connected, and $v'$ is the only neighbor of $v$ in that subgraph. Next, we apply induction for $k-1$ with root vertex $v$ on that set. 
If this finds a set $S$ and a coloring, we add $v$ to $S$. 
If it finds a path $P_{k-1}$, we add $v$ to the path. 
If it finds a $P_t$, we output it. 
If it is not 3-colorable, then neither is $G$. 

The total running time of the recursive application of the algorithm is $O\left((k-1)\ceil{\frac{t-1}{2}}|E(G)|\right)$, and all preprocessing steps leading there can be implemented with a running time of $O\left(\ceil{\frac{t-1}{2}}|E(G)|\right)$, which implies the result. 
\end{proof}

We can now give the proof of our main result.

\begin{proof}[Proof of Theorem~\ref{thm:main}]
We use the algorithm from Lemma~\ref{lem:main} with $k = t$. Since only outcomes (d) and (e) can occur in this setting, it is sufficient to show that if $G$ is 3-colorable, we can find a 3-coloring of $F(S) \cup N(F(S))$ in time $O(3^{t-2} \cdot \textnormal{poly}(|V(G)|))$, as follows: For each vertex in the set $S$, we try each possible color for a total of at most $3^{t-2}$ possibilities. By definition of $F(S)$, this determines the color of each vertex in $F(S)$, and for each vertex in $N(F(S))$, there are at most two possible colors. Thus, we reduced to a 2-list-coloring problem, which can be solved in linear time by reduction to~\textsc{2Sat}~\cite{edwards, erdos, vizing}. Hence if $G$ is 3-colorable, we can 3-color $F(S) \cup N(F(S))$, and add these three new colors to the coloring from Lemma~\ref{lem:main}.

The total running time follows from the running time of the algorithm in Lemma~\ref{lem:rtmain} in addition to an algorithm determining the connected components of $G$. 
\end{proof}

By combining Theorem~\ref{thm:main} with Lemma~\ref{lem:one}, we obtain the algorithms for coloring $3$-colorable $P_t$-free graphs with the number of colors shown in Table~\ref{t:k3} (if there is a triangle) and Table~\ref{t:no-k3} (if there is no triangle). For $t$ larger than shown in the table, Theorem~\ref{thm:main} uses a smaller number of colors. 

\begin{table}[htb]
\begin{center}
\begin{tabular}{l|r|r|r|r|r|r|r|r|r|r }
 $t$ & 3 & 4 & 5 & 6 & 7 & 8 & 9 & 10 & 11 & $> 11$ \\  
\hline 
 $\max\sset{3, 2t-5}$ & 3 & 3 & 5 & 7 & 9 & 11 & 13 & 15 & 17 &\\   
 $\max\sset{5, 2\ceil{\frac{t-1}{2}}-2}$ & 5 & 5 & 5 & 5 & 5 & 6 & 6 & 8 & 8 &\\   
 Best option & 3 & 3 & 3\footnotemark & 5 & 5 & 6 & 6 & 8 & 8 &$2\ceil{\frac{t-1}{2}}-2$\\   
\end{tabular}
\end{center}
\caption{Number of colors we use for a 3-colorable $P_t$-free graph if there is a triangle}
\label{t:k3}
\end{table}
\footnotetext{
If $t = 5$, we can improve the number of colors required if there is a triangle to 3, because it cannot happen that there are two components $C, C'$ of $G - (\sset{v} \cup N(v))$ and components $D, D'$ of $G[N(v)]$ so that $C$ has a neighbor in $D$ but not $D'$, and $C'$ has a neighbor in $D'$ but not $D'$, because this already yields an induced $P_5$. Thus, by induction, all vertices will be in $F(S) \cup N(F(S))$, where we can test for 3-colorability as described in Theorem~\ref{thm:main}. }

\begin{table}[htb]
\begin{center}
\begin{tabular}{l|r|r|r|r|r|r|r|r|r|r }
 $t$ & 3 & 4 & 5 & 6 & 7 & 8 & 9 & 10 & 11 & $> 11$\\  
\hline 
 $\max\sset{2, t-2}$ & 2 & 2 & 3 & 4 & 5 & 6 & 7 & 8 & 9& \\   
 $\max\sset{4, \ceil{\frac{t-1}{2}}+1}$ & 4 & 4 & 4 & 4 & 4 & 5 & 5 & 6 & 6 &\\   
 Best option & 2 & 2 & 3 & 4 & 4 & 5 & 5 & 6 & 6 & $\ceil{\frac{t-1}{2}} + 1$\\   
\end{tabular}
\end{center}
\caption{Number of colors we use for a 3-colorable $P_t$-free graph if there is no triangle}
\label{t:no-k3}
\end{table}

\section{Hardness result} \label{sec:hard}

In this section, we show that improving Lemma~\ref{lem:one} is hard. More precisely: 

\begin{theorem}\label{lem:hardness}
Let $G$ be a connected graph and $v \in V(G)$ so that there is no induced $P_t$ in $G$ starting at $v$. Then, deciding $k$-colorability on this class of graphs is NP-hard if $k \geq 4$ and $t \geq 3$ or if $k = 3$ and $t \geq 5$. It can be solved in polynomial time if $t \leq 2$ or if $k = 3$ and $t \leq 4$. 
\end{theorem}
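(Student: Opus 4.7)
The plan is to treat the polynomial and the NP-hard cases separately.

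For the polynomial cases, first dispose of $t \leq 2$ essentially by inspection: the class is either empty ($t = 1$) or forces $v$ to have no neighbour, so that connectivity makes $G$ a single vertex ($t = 2$); in either situation any $k$-colouring exists trivially. For $k = 3$ and $t \leq 4$, simply run the algorithm from Lemma~\ref{lem:one}; the hypothesis rules out outcome~(\ref{it:path}), and since $\max\{2, t-2\} \leq 2$ and $\max\{3, 2t-5\} \leq 3$ when $t \leq 4$, the remaining outcomes decide 3-colourability in polynomial time.

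For the hardness case $k \geq 4$ and $t \geq 3$, I will reduce from the $(k-1)$-colouring problem on connected graphs, which is NP-hard since $k - 1 \geq 3$. Given a connected instance $H$, form $G$ by adding a single new vertex $v$ adjacent to every vertex of $H$. Then $G$ is connected, $v$ has no non-neighbour (so no induced $P_3$, and a fortiori no induced $P_t$ for any $t \geq 3$, starts at $v$), and $G$ is $k$-colourable if and only if $H$ is $(k-1)$-colourable, since in any $k$-colouring $v$ uses one colour and $H$ uses the remaining $k-1$.

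For the hardness case $k = 3$ and $t \geq 5$, note that having no induced $P_5$ starting at $v$ implies having no induced $P_t$ starting at $v$ for any $t \geq 5$, so it suffices to handle $t = 5$. The plan is to reduce from 3-colouring on general connected graphs by attaching a gadget at a designated vertex $v$ which simultaneously (i) creates chords destroying every potential induced $P_5$ starting at $v$ and (ii) preserves the 3-colouring problem on $H$. The hard part is the design of this gadget: naive attempts such as making $v$ universal, attaching $v$ as a pendant, or joining $v$ to all of $V(H)$ through a small buffer either collapse the problem to bipartiteness of a subgraph of $H$ (polynomial) or fall back into the polynomial regime of Lemma~\ref{lem:one}. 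A working construction must give $V(G) \setminus N[v]$ enough structure -- for instance, a controlled union of small cliques joined to $N(v)$ in a dominating fashion -- to rule out induced $P_4$s starting at vertices of $N(v)$ and entering the non-neighbourhood of $v$, while remaining flexible enough to encode arbitrary 3-colouring constraints.
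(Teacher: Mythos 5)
Your handling of the polynomial cases and of the hardness case $k \geq 4$, $t \geq 3$ is correct. For $k \geq 4$ you use a slightly different reduction than the paper: you reduce from $(k-1)$-colouring by adding a single universal vertex $v$, whereas the paper reduces from $3$-colouring by adding a clique of size $k-3$ complete to the instance. Both are valid and essentially equivalent in spirit; yours is marginally cleaner. Your observation that it suffices to treat $t=5$ in the remaining case is also sound, since any induced $P_t$ starting at $v$ with $t \geq 5$ contains an induced $P_5$ starting at $v$, so the $t=5$ class is the most restrictive.

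However, the case $k = 3$, $t = 5$ is a genuine gap: you describe constraints that a gadget must satisfy, and correctly note that the naive attempts fail, but you never produce a construction. That construction is the entire content of this part of the theorem, and it is not obvious. The paper's actual reduction is from \textsc{NAE-3Sat}, not from $3$-colouring: a central vertex $v$ adjacent to all $2n$ literal vertices $x_i$ and $\overline{x_i}$ (with $x_i$ adjacent to $\overline{x_i}$); for each clause a disjoint triangle, with each triangle vertex joined to the literal it represents. Since $v$ dominates all literal vertices, an induced path from $v$ exits $N(v)$ after one step and then must stay inside a single clause triangle, which contains no induced $P_3$, so no induced $P_5$ starts at $v$. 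In a proper $3$-colouring, $N(v)$ is $2$-coloured and $x_i, \overline{x_i}$ get opposite colours, encoding a truth assignment; a clause triangle is properly $3$-colourable if and only if its three literals are not all coloured alike, which is exactly the not-all-equal condition. Your plan hints at the shape of this (small cliques joined to $N(v)$), but without the choice of \textsc{NAE-3Sat} as the source problem and the specific wiring, the reduction is not established.
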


\begin{proof}
For the polynomial time solvability, observe that 
if $t \leq 2$, then $|V(G)| \leq 1$. If $k = 3, t \leq 4$, then the result follows from Lemma~\ref{lem:one}. 

For the hardness, first consider the case $k \geq 4$, $t \geq 3$. In this case, we can reduce the 3-coloring problem to this problem by taking any instance $G$ and adding a clique of size $k-3$ complete to $G$. Then, no vertex in this clique starts a $P_3$, but the resulting graph is $k$-colorable if and only if $G$ is $3$-colorable. 

It remains to consider the case $k = 3$, $t\geq 5$. We show a reduction from the NP-complete problem \textsc{NAE-3Sat} \cite{garey}. An instance of \textsc{NAE-3Sat} is a boolean formula with variables $x_1, \dots, x_n$ and clauses $C_1, \dots, C_m$, where each clause contains exactly three literals (variables or their negations). It is a \textsc{Yes}-instance if and only if there is an assignment of the variables as true or false so that for every clause, not all three literals in the clause are true, and not all three are false. 

We construct a graph $G$ as follows: $G$ contains a vertex $v$, vertices labeled $x_i$ and $\overline{x_i}$ for $1 \leq i \leq n$, and a triangle $T_j$ for each clause $C_j$. The vertex $v$ is adjacent to all vertices $x_i$ and $\overline{x_i}$, but not to any of the triangles. For each $i$, $x_i$ is adjacent to $\overline{x_i}$. For each clause $C_j$, we assign each literal a vertex of the triangle $T_j$, and connect this vertex to the literal (the vertex labeled $x_i$ or $\overline{x_i}$). There are no other edges in $G$. 

Then, there is no $P_5$ starting at $v$ in $G$, because such a path would have to contain exactly one of the vertices labeled $x_i$ and $\overline{x_i}$, and this would be the second vertex of the path. As there are no edges between the triangles $T_j$, all remaining vertices of the $P_5$ would have to be in one triangle $T_j$. But no triangle can contain a $P_3$. Therefore, $G$ is a valid instance. 

It remains to show that $G$ is 3-colorable if and only if the instance of \textsc{NAE-3Sat} is a \textsc{Yes}-instance. If $G$ has a 3-coloring, then the neighbors of $v$ are 2-colored (say with colors $1$ and $2$) and $x_i$ never receives the same color as $\overline{x_i}$. Assign the variables so that literals colored 1 are true, and those colored 2 are false. Then, if there is a clause $C_j$ so that all of its literals are true, this means that each vertex of $T_j$ has a neighbor colored 1, so $T_j$ uses only colors $2$ and $3$, which is impossible in a valid coloring of a triangle. For the same reason, there cannot be a clause so that all of its literals are false. Thus,~$G$ was constructed from a \textsc{Yes}-instance. 

Conversely, if the instance we started with is a \textsc{Yes}-instance, we color $v$ with color 3, true literals with color 1, and false literals with color 2. For each triangle $T_j$, one of the vertices adjacent to a true literal is colored 2, one of the vertices adjacent to a false literal is colored 1, and the remaining vertex is colored 3. This is a valid 3-coloring of $G$. 
\end{proof}

\section{Conclusion}

In this paper we showed how to color a given 3-colorable $P_t$-free graph with a number of colors that is $t$, roughly.
The running time of our algorithm is of the form $O(f(t) \cdot n^{O(1)})$, when the input graph has $n$ vertices, and thus FPT in the parameter $t$. (The class FPT contains the fixed parameter tractable problems, which are those that can be solved in time $f(k) \cdot {|x|}^{O(1)}$ for some computable function $f$.)

In view of this, it seems to be an intriguing question whether the 3-coloring problem is fixed-parameter tractable when parameterized by the length of the longest induced path.
That is, whether there is an algorithm with running time $O(f(t) \cdot n^{O(1)})$ that decides 3-colorability in $P_t$-free graphs.
So far, however, it is not even known whether there is an XP algorithm to decide 3-colorability in $P_t$-free graphs. (XP is the class of parameterized problems that can be solved in time $O(n^{f(k)})$ for some computable function $f$.)
If such an XP-algorithm existed, this would show that the problem is in P whenever $t$ is fixed.
Therefore, attempting to prove W[1]-hardness seems to be more reasonable than trying to prove that the problem is in FPT.

Another question we addressed is $k$-coloring connected graphs so that some vertex is not the end vertex of an induced $P_t$. We showed that coloring in this case is NP-hard whenever $k=3$ and $t \geq 5$, or $k \geq 4$ and $t \geq 3$. Lemma~\ref{lem:one} gives a simple algorithm for an $f(t)$-approximate coloring for $k=3$ and any $t$, and it would be interesting to have a complementing result proving hardness of approximation. On the other hand, any improvement of Lemma~\ref{lem:one} would immediately yield an improvement of our main result. 

\section*{Acknowledgments}

We are thankful to Paul Seymour for many helpful discussions. We  thank Stefan Hougardy for pointing out \cite{kawa} to us.

\end{document}